\newtheorem{thm}{Theorem}[section]
\newtheorem{cor}[thm]{Corollary}
\newtheorem{prop}[thm]{Proposition}
\theoremstyle{definition}
\numberwithin{equation}{section}
\newcommand{\bb}[1]{\mathbb{#1}}
\begin{document}

\title[Hardy and Hilbert]{Extensions of the Inequalities of Hardy and HIlbert}

\author[V.~I.~Paulsen]{Vern I.~Paulsen}
\address{Department of Mathematics, University of Houston,
Houston, Texas 77204-3476, U.S.A.}
\email{vern@math.uh.edu}
\author[D.~Singh]{Dinesh Singh}
\address{Department of Mathematics, University of Delhi, Delhi, India}

\thanks{Research supported in part by NSF }
\keywords{generalized Hardy inequality; generalized Hilbert inequality; equivalence; $H^1-BMOA$ duality}
\subjclass[2000]{Primary 26D15; 30H10; Secondary 30H35}

\begin{abstract}
In this note we produce generalized versions of the classical inequalities of Hardy and of Hilbert and we establish their equivalence. Our methods rely on the $H^1-BMOA$ duality. We produce a class of examples to establish that the generalizations are non-trivial.
\end{abstract}

\maketitle

\section{Introduction}
The classical inequality of Hardy states that there exists a constant
$A$ so that if $f(z) =
\sum_{n=0}^{\infty} a_n z^n$ is a function in the Hardy space
$H^1(\bb D),$ then
\begin{equation}\label{hardy} \sum_{n=0}^{\infty} \frac{|a_n|}{n+1} \le A
  \|f\|_1.\end{equation}
The well known inequality of Hilbert states that there exists a
constant $B$ so that if $(\alpha_n), (\beta_n)$ are two sequences in
the HIlbert space $\ell^2_+= \ell^2(\bb Z^+),$ then
\begin{equation}\label{hilbert} \sum_{m=0}^{\infty} \sum_{n=0}^{\infty}
  \frac{|\alpha_n||\beta_n|}{n+m+1} \le B
  \|(\alpha_n)\|_2\|(\beta_n)\|_2. \end{equation}

 In this paper we establish
generalizations of both of these inequalities by replacing  the
sequences $\left\{\frac{1}{n}\right\}$ and $\left\{\frac{1}{n+m+1}\right\}$ with a larger class
of sequences. We then prove that the two generalized
inequalities are equivalent.  In particular, it follows that the two
classical inequalities (\ref{hardy}) and (\ref{hilbert}) are
equivalent.  The literature frequently mentions the fact that Hilbert's
inequalitiy (\ref{hilbert}) implies Hardy's inequality (\ref{hardy}),
but we found no mention of the converse.

The inequalities of Hardy~(\ref{hardy}) and Hilbert~(\ref{hilbert})
are extremely important for a variety of reasons and they are still of current interest. For some classic expositions, we refer the
reader to  \cite{Du}, \cite{Ga}, \cite{HLP} and \cite{He}.
For a sampling of the current research interest, we refer the reader to \cite{MV}, \cite{Ol}, \cite{Sa}, \cite{SY}, \cite{Ya}, and \cite{zhu}.

\section{Preliminary Results}

The Lebesque spaces on the unit circle $\bb T$ in the complex plane
shall be denoted by $L^p,$ $ 1 \le p \le \infty.$  The Hardy spaces
$H^p$ are the subspaces consisting of all $f$ in $L^p$ whose negative
Fourier coefficients vanish. In particular, the Hilbert
space $H^2$ will often be identified with $\ell^2_+,$ via the map $e_n
\to z^n, n \ge 0.$ The $H^p$ spaces can be identified as spaces of
holomorphic functions on the open unit disk $\bb D$ that satisfy the
condition:
\begin{equation} \sup_{0<r<1} \left( \frac{1}{2 \pi} \int_0^{2\pi} |f(re^{i \theta})|^p d
\theta\right)^{1/p} <  \infty, \end{equation}
and for $f \in H^p$ this supremum is equal to the norm of $f$ in
$L^p.$

The dual of $H^1$ is the Banach space $BMOA$ defined as all functions
$f \in H^1$ such that
\[ \|f\|_* = \sup_I I(|f - I(f)|) < \infty,\] where $I$ varies over
all subarcs of $\bb T,$ $|I|$ denotes the normalized arclength of $I$ and
\[ I(f) = \frac{1}{|I|} \int_I f(\theta) d \theta.\]
The norm of a function $f \in BMOA$ is given by  
\begin{equation} \|f\| = |f(0)| + \|f\|_*.\end{equation}
We recall that by the famous duality theorem of Fefferman, $BMOA$ can
be identified isometrically and conjugate linearly with the dual of $H^1$ via the dual pairing given by
\[ \langle f,p \rangle = \frac{1}{2\pi} \int_0^{2\pi} p(e^{i \theta})
\overline{f(e^{i \theta})} d \theta,\]
for $f \in BMOA$ and $p \in H^1$ a polynomial.

Finally, we shall also need Fefferman's second characterization of
BMOA. Namely, that a function $f \in H^1$ is in $BMOA$ if and only if
$(1-r^2)|f^{\prime}(r e^{i \theta})|^2 dr d \theta$ is a Carleson
measure on $\bb D.$

For the details of these facts, we refer the reader to \cite{Du},
\cite{Ga}, and \cite{HLP}.

\section{Statement of the Main Results}

We state our three main results below, together with their immediate
corollaries, but we defer their proofs until later sections.
We let $X$ denote the space of sequences $\{c_n\}$ such that
\begin{equation} \|\{c_n\}\|^2 = \sup \left\{ \frac{\sum_{k=0}^n (k+1)^2 |c_k|^2}{n+1}: n \in \bb N \right\}<\infty.\end{equation}
It is not hard to see that $X$ is a Banach space in this
norm and that $c_k = \frac{1}{k+1}$ is an element of unit norm in this space.

\begin{thm}[Generalized Hardy Inequality] There is a constant $A$ so
  that if $(c_n)$ is a sequence in
  $X,$
then for any $f(z) = \sum_{k=0}^{\infty} a_n z^n$ in $H^1,$
\[ \sum_{n=0}^{\infty} |a_nc_n| \le A \|(c_n)\| \|f\|_1. \]
\end{thm}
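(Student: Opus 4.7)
The plan is to deduce the inequality from the $H^1$-$BMOA$ duality recalled in Section~2. Given $f(z)=\sum a_n z^n\in H^1$, choose unimodular scalars $\sigma_n$ with $a_n=|a_n|\sigma_n$ and set
\[ g(z)=\sum_{n=0}^{\infty}|c_n|\sigma_n z^n. \]
The $X$-norm hypothesis, applied at $N=n$, gives $(n+1)|c_n|^2\le\|(c_n)\|^2$, so $|c_n|=O(n^{-1/2})$ and $g$ is a well-defined holomorphic function on $\bb D$. When $f$ is a polynomial the Fefferman pairing evaluates to
\[ \langle g,f\rangle=\sum_n a_n\,\overline{|c_n|\sigma_n}=\sum_n|a_n c_n|. \]
So once I show $g\in BMOA$ with $\|g\|\le C\|(c_n)\|$, duality yields $\sum|a_nc_n|=|\langle g,f\rangle|\le C\|(c_n)\|\|f\|_1$. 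The case of a general $f\in H^1$ then follows by polynomial approximation (e.g.\ Fej\'er means) and Fatou's lemma for sums, using continuity of the coefficient functionals on $H^1$.

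The core of the proof is thus the $BMOA$ estimate on $g$. I would invoke Fefferman's second characterization from Section~2 and prove the stronger, rotation-invariant annular bound
\[ \int_{\{1-h<|z|<1\}}(1-|z|^2)|g'(z)|^2\,dA(z)\le C h\,\|(c_n)\|^2,\qquad 0<h<1, \]
which implies $d\mu=(1-|z|^2)|g'(z)|^2\,dA$ is Carleson with norm $\lesssim\|(c_n)\|^2$, since $S(I)\subset\{|z|>1-|I|\}$. Writing this in polar coordinates and using orthogonality of $\{e^{in\theta}\}$ reduces the left side to $2\pi\sum n^2|c_n|^2\int_{1-h}^{1}(1-r^2)r^{2n-1}\,dr$, and a direct computation shows the $r$-integral is comparable to $\min(h^2,n^{-2})$. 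The sum therefore splits as
\[ h^2\sum_{n\le 1/h}n^2|c_n|^2\;+\sum_{n>1/h}|c_n|^2, \]
and the first piece is bounded by $Ch\|(c_n)\|^2$ directly from the definition of the $X$-norm taken at $N\approx 1/h$.

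The main obstacle is the high-frequency tail $\sum_{n>1/h}|c_n|^2$, since the pointwise bound $|c_n|^2\lesssim\|(c_n)\|^2/(n+1)$ only gives a divergent $\log(1/h)$. I would handle this by a dyadic decomposition: for the block $B_j=[2^j,2^{j+1})$, the estimate $(k+1)^2\ge 2^{2j}$ on $B_j$ combined with the $X$-norm taken at $N=2^{j+1}-1$ yields
\[ \sum_{k\in B_j}|c_k|^2\le 2^{-2j}\sum_{k\in B_j}(k+1)^2|c_k|^2\le C\,2^{-j}\|(c_n)\|^2. \]
Summing this geometric series over $j\ge\log_2(1/h)$ gives $\sum_{n>1/h}|c_n|^2\le Ch\|(c_n)\|^2$, which, combined with $|g(0)|=|c_0|\le\|(c_n)\|$, completes the Carleson-measure estimate and hence the theorem.
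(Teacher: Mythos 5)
Your proposal is correct, and its overall strategy coincides with the paper's: both define $g(z)=\sum_n \sigma_n |c_n| z^n$ with phases matched to the coefficients of $f$, reduce the theorem via Fefferman's $H^1$--$BMOA$ duality to the bound $\|g\|_{BMOA}\le C\|(c_n)\|$, and verify that bound through the Carleson-measure characterization of $BMOA$; moreover, both arguments actually prove the stronger rotation-invariant estimate over the full annulus $\{1-h<|z|<1\}$, since the paper too bounds the integral over $R(I)$ by integrating over all of $\theta$. Where you genuinely diverge is in how the annular Carleson estimate is obtained. The paper (Proposition~4.1) works circle by circle: it bounds $\int_0^{2\pi}|g'(re^{i\theta})|^2\,d\theta$ by $CK\|(c_n)\|^2/(1-r)$ for each fixed $r$, grouping the frequency sum $\sum_k k^2|c_k|^2 r^{2k}$ into $r$-dependent blocks of length $\left[\frac{1}{1-r}\right]$, controlling each block by the $X$-norm, and summing the resulting geometric series --- this is where the constant $K=\sup_r \left(r/\left(1-r^{2[1/(1-r)]}\right)\right)^2$ enters --- and only then integrates in $r$. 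You instead integrate radially first, obtaining $\int_{1-h}^1(1-r^2)r^{2n-1}\,dr\approx\min(h^2,n^{-2})$, and split the frequency sum at $n\approx 1/h$: the low frequencies are handled by the $X$-norm at $N\approx 1/h$, and the tail by the dyadic block bound $\sum_{k\in[2^j,2^{j+1})}|c_k|^2\le C2^{-j}\|(c_n)\|^2$, which sums to $Ch\|(c_n)\|^2$; you correctly flag that the naive pointwise bound $|c_n|^2\lesssim\|(c_n)\|^2/(n+1)$ would lose a factor $\log(1/h)$, which is exactly the trap the blocking avoids. Your fixed dyadic decomposition is arguably cleaner, sidestepping the $r$-dependent blocking and the constant $K$, while the paper's method yields the sharper per-circle growth estimate $\int_0^{2\pi}|g'(re^{i\theta})|^2\,d\theta=O(1/(1-r))$ as a byproduct; you are also more explicit than the paper about the approximation step passing from polynomials to general $f\in H^1$ and about the $|g(0)|$ term in the $BMOA$ norm.
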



\begin{cor}[Hardy's Inequality] There is a constant $\delta$ such that
  for any $f(z) = \sum_{n=0}^{\infty} a_n z^n$ in $H^1,$
\[ \sum_{n=0}^{\infty} \frac{|a_n|}{n+1} \le \delta \|f\|_1.\]
\end{cor}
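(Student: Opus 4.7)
The plan is to deduce this corollary as an immediate specialization of the Generalized Hardy Inequality (Theorem~3.1) applied to the particular sequence $c_n = 1/(n+1)$. The entire work of the argument is therefore already done by the theorem; what remains is a one-line verification that this specific sequence lies in $X$, together with a computation of its norm.

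First, I would recall the definition of the norm on $X$, namely
\[
\|\{c_n\}\|^2 = \sup_{n \in \bb N} \frac{\sum_{k=0}^n (k+1)^2 |c_k|^2}{n+1},
\]
and then substitute $c_k = 1/(k+1)$. Each summand $(k+1)^2 |c_k|^2$ collapses to $1$, so the numerator equals $n+1$, and the quotient is identically $1$ for every $n$. Consequently $(1/(n+1)) \in X$ with $\|(1/(n+1))\| = 1$, which is exactly the unit-norm assertion already noted immediately after the definition of $X$.

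Second, I would apply Theorem~3.1 with this choice of sequence. For any $f(z) = \sum_{n=0}^\infty a_n z^n$ in $H^1$, the conclusion of the theorem reads
\[
\sum_{n=0}^{\infty} \frac{|a_n|}{n+1} = \sum_{n=0}^{\infty} |a_n c_n| \le A \,\|(c_n)\|\, \|f\|_1 = A \|f\|_1,
\]
so the corollary holds with $\delta = A$.

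There is no real obstacle here: the only genuine content is the observation that the weights $1/(n+1)$ fit inside the class $X$, which in turn justifies why $X$ is the right generalization to consider. The deeper work, of course, is hidden in the proof of Theorem~3.1 itself, which is postponed to a later section.
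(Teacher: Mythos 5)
Your proposal is correct and is exactly the paper's proof: specialize Theorem~3.1 to $c_n = \frac{1}{n+1}$, which the paper has already noted is a unit-norm element of $X$. Your explicit verification that $\|(1/(n+1))\| = 1$ is a minor elaboration of that remark, not a different route.
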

\begin{proof} Set $c_n = \frac{1}{n+1}$ and apply Theorem~3.1.
\end{proof}

\begin{thm}[Generalized Hilbert Inequality] There is a constant $B$ so
  that if $(c_n) \in X$ and $(a_n), (b_n)$ in $\ell^2_+,$ then
\[ \sum_{n,m=0}^{\infty} |a_n| |b_m| | c_{n+m}| \le B \|(c_n)\| \|(a_n)\|_2\|(b_n)\|.\]
\end{thm}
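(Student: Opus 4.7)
The plan is to derive the Generalized Hilbert Inequality (Theorem 3.3) as a consequence of the Generalized Hardy Inequality (Theorem 3.1), thereby establishing one direction of the claimed equivalence. The key observation is the classical factorization that relates $\ell^2_+ \times \ell^2_+$ convolutions to products of $H^2$ functions, together with the fact that the product of two $H^2$ functions lies in $H^1$ with the expected norm bound.

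First I would reduce to nonnegative scalars. Given $(a_n),(b_n)\in\ell^2_+$, replace them by $\tilde a_n = |a_n|$ and $\tilde b_n = |b_m|$; these have the same $\ell^2$ norms, and the sum on the left of the inequality depends only on $|a_n|$ and $|b_m|$, so it suffices to prove the inequality assuming $a_n,b_m \ge 0$. Next, form the holomorphic functions
\[ A(z) = \sum_{n=0}^{\infty} a_n z^n, \qquad B(z) = \sum_{m=0}^{\infty} b_m z^m, \]
which lie in $H^2$ with $\|A\|_2 = \|(a_n)\|_2$ and $\|B\|_2 = \|(b_m)\|_2$. By the Cauchy--Schwarz inequality applied on the circle, the pointwise product $F(z) = A(z)B(z)$ lies in $H^1$ with
\[ \|F\|_1 \le \|A\|_2 \|B\|_2 = \|(a_n)\|_2 \|(b_m)\|_2. \]
Expanding the product, the $k$th Taylor coefficient of $F$ is $d_k = \sum_{n+m=k} a_n b_m \ge 0$.

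Now I would apply Theorem 3.1 (the Generalized Hardy Inequality) to $F$ with the given sequence $(c_n) \in X$. This yields
\[ \sum_{k=0}^{\infty} |d_k||c_k| \le A \|(c_n)\| \|F\|_1 \le A \|(c_n)\| \|(a_n)\|_2 \|(b_m)\|_2. \]
Because $a_n, b_m \ge 0$, the double sum on the left of Theorem~3.3 rearranges as
\[ \sum_{n,m=0}^{\infty} a_n b_m |c_{n+m}| \;=\; \sum_{k=0}^{\infty} |c_k| \sum_{n+m=k} a_n b_m \;=\; \sum_{k=0}^{\infty} |c_k| d_k, \]
and combining with the previous estimate gives the desired inequality with the same constant $B = A$ as in Theorem 3.1.

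The argument is essentially a bookkeeping exercise once the Generalized Hardy Inequality is in hand; the only place where genuine analysis enters is the passage $H^2 \cdot H^2 \subset H^1$, and this is immediate from Cauchy--Schwarz. The main conceptual point --- and the one I would highlight --- is that this reduction shows that the generalized Hilbert inequality follows from the generalized Hardy inequality via exactly the classical $H^2$-factorization mechanism, so no new ingredient beyond Theorem 3.1 is needed here. (The converse implication, that Hardy follows from Hilbert, will presumably be handled separately using the $H^1$--$BMOA$ duality advertised in the abstract.)
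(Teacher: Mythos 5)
Your proof is correct, but it is not the route the paper takes for Theorem~3.3 itself. The paper proves the generalized Hilbert inequality \emph{directly} in Section~5, by the same $H^1$--$BMOA$ duality mechanism used for Theorem~3.1: it sets $g(z)=\sum_{n\ge 0}|c_n|z^n$, invokes Proposition~4.1 (the Carleson-measure estimate showing the map $L:X\to BMOA$ is bounded), and then bounds $\sum_{n,m}|a_n||b_m||c_{n+m}| = \langle fh, g\rangle \le \|L\|\,\|(c_n)\|\,\|fh\|_1 \le \|L\|\,\|(c_n)\|\,\|(a_n)\|_2\|(b_n)\|_2$, where $f,h$ are the $H^2$ functions with coefficients $|a_n|,|b_n|$. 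Your argument --- deduce Hilbert from Hardy via the easy inclusion $H^2\cdot H^2\subset H^1$ and the convolution identity for the coefficients --- is instead, almost verbatim, the second half of the paper's proof of Theorem~3.5, where it appears as the implication ``generalized Hardy implies generalized Hilbert'' with the conclusion $B\le A$. There is no circularity in your approach, since the paper's proof of Theorem~3.1 is independent of Theorem~3.3; and your route has the merit of yielding the sharp relation $B\le A$ between optimal constants with no analysis beyond Cauchy--Schwarz (you correctly note you only need the easy direction of factorization, not the Riesz factorization $\|f\|_1=\|g\|_2\|h\|_2$, which the paper needs only for the converse implication in Theorem~3.5). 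What the paper's direct proof buys is structural independence: both inequalities flow separately from the single Carleson estimate of Proposition~4.1, so that Theorem~3.5 can then be presented as a genuine equivalence of constants rather than a derivation of one theorem from the other. Two trivial blemishes, neither affecting correctness: you reuse $A$ and $B$ both as the constants and as the function names $A(z),B(z)$, and the reduction line $\tilde b_n = |b_m|$ has mismatched indices.
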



\begin{cor}[Hilbert's Inequality] There is a constant $B$ so that for
  any $(a_n), (b_n)$ in $\ell^2_+,$
\[\sum_{m,n=0}^{\infty} \frac{|a_n||b_m|}{n+m+1} \le B
\|(a_n)\|_2\|(b_n)\|_2.\]
\end{cor}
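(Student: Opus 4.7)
The plan is completely routine: this corollary is a direct specialization of Theorem 3.3 (the Generalized Hilbert Inequality), obtained by inserting the specific sequence $c_n = \frac{1}{n+1}$. The only content of the proof is to verify the hypothesis of Theorem 3.3, namely that this sequence lies in $X$, and to record what the conclusion becomes.

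For the verification, I would compute $\|(c_n)\|$ directly from the definition of $X$. With $c_k = \frac{1}{k+1}$ we have $(k+1)^2|c_k|^2 = 1$, so for every $n \in \bb N$,
\[ \frac{\sum_{k=0}^{n}(k+1)^2|c_k|^2}{n+1} = \frac{n+1}{n+1} = 1, \]
and therefore $\|(c_n)\|^2 = 1$. In particular $(c_n) \in X$, as was already noted in the paragraph preceding Theorem 3.1.

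To finish, I would apply Theorem 3.3 with this choice of $(c_n)$, observe that $c_{n+m} = \frac{1}{n+m+1}$, and substitute $\|(c_n)\| = 1$ on the right hand side. This yields exactly
\[ \sum_{n,m=0}^{\infty} \frac{|a_n||b_m|}{n+m+1} \le B\,\|(a_n)\|_2 \|(b_n)\|_2, \]
with the same constant $B$ as in Theorem 3.3. There is no genuine obstacle: once the generalized inequality is in hand, the only task is checking the norm computation above, which is immediate.
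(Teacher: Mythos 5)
Your proposal is correct and matches the paper's proof exactly: the paper's proof is the one-line ``Apply Theorem~3.3 with $c_n = \frac{1}{n+1}$.'' Your additional verification that $\|(c_n)\| = 1$ (so $(c_n) \in X$) merely makes explicit what the paper already noted before Theorem~3.1.
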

\begin{proof} Apply Theorem~3.3 with $c_n = \frac{1}{n+1}.$
\end{proof}

\begin{thm} For each $(c_n)$ in $X,$ the corresponding
  generalized Hardy inequality is equivalent to the corresponding
  generalized Hilbert inequality. Moreover, the least constant $A$ in
  the generalized Hardy inequality is equal to the least constant $B$
  in the generalized Hilbert inequality.
\end{thm}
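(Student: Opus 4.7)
My plan is to prove both implications by exploiting the factorization $H^{1}=H^{2}\cdot H^{2}$ in the classical sense of F.~Riesz (every $f\in H^{1}$ can be written as $f=gh$ with $g,h\in H^{2}$ and $\|f\|_{1}=\|g\|_{2}\|h\|_{2}$), together with the Cauchy product formula for Taylor coefficients. The symmetry that makes the optimal constants agree comes from the fact that passing between the two inequalities in either direction costs nothing beyond this factorization.

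For the implication \emph{generalized Hilbert} $\Rightarrow$ \emph{generalized Hardy}, I would take $f(z)=\sum_{n\ge0}a_{n}z^{n}\in H^{1}$ and factor $f=gh$ with $g(z)=\sum\alpha_{k}z^{k}$, $h(z)=\sum\beta_{k}z^{k}$ in $H^{2}$ and $\|g\|_{2}\|h\|_{2}=\|f\|_{1}$. Then $a_{n}=\sum_{k+j=n}\alpha_{k}\beta_{j}$, so $|a_{n}|\le\sum_{k+j=n}|\alpha_{k}||\beta_{j}|$, and applying the generalized Hilbert inequality to the sequences $(|\alpha_{k}|),(|\beta_{k}|)\in\ell^{2}_{+}$ gives
\[
\sum_{n=0}^{\infty}|a_{n}c_{n}|\;\le\;\sum_{k,j\ge0}|\alpha_{k}||\beta_{j}||c_{k+j}|\;\le\;B\,\|(c_{n})\|\,\|g\|_{2}\|h\|_{2}\;=\;B\,\|(c_{n})\|\,\|f\|_{1}.
\]
This shows the least constant $A$ in generalized Hardy satisfies $A\le B$.

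For the reverse implication \emph{generalized Hardy} $\Rightarrow$ \emph{generalized Hilbert}, given $(a_{n}),(b_{n})\in\ell^{2}_{+}$ I would set $g(z)=\sum_{n\ge0}|a_{n}|z^{n}$ and $h(z)=\sum_{n\ge0}|b_{n}|z^{n}$, which lie in $H^{2}$ with $\|g\|_{2}=\|(a_{n})\|_{2}$ and $\|h\|_{2}=\|(b_{n})\|_{2}$. Then $f:=gh\in H^{1}$ with $\|f\|_{1}\le\|g\|_{2}\|h\|_{2}$, and the Taylor coefficients of $f$ are the nonnegative numbers $a'_{n}=\sum_{k+j=n}|a_{k}||b_{j}|$. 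Applying the generalized Hardy inequality to $f$ yields
\[
\sum_{k,j\ge0}|a_{k}||b_{j}||c_{k+j}|\;=\;\sum_{n=0}^{\infty}a'_{n}|c_{n}|\;=\;\sum_{n=0}^{\infty}|a'_{n}c_{n}|\;\le\;A\,\|(c_{n})\|\,\|f\|_{1}\;\le\;A\,\|(c_{n})\|\,\|(a_{n})\|_{2}\|(b_{n})\|_{2},
\]
so $B\le A$. Combining both estimates gives $A=B$.

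There is no real obstacle here beyond invoking the F.~Riesz factorization; what makes the argument work cleanly for the \emph{same} constants is that absorbing absolute values into the functions $g,h$ does not change their $H^{2}$ norms, and that the Cauchy product of two $H^{2}$ functions produces an $H^{1}$ function with the submultiplicative norm estimate (which is an equality after factorization). The small subtlety worth spelling out is that Taylor coefficients survive the $|\cdot|$ operation in the sense used above, so that the sequence appearing on both sides of the Cauchy product matches exactly the Hilbert-type double sum.
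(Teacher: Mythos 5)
Your proposal is correct and follows essentially the same route as the paper: both directions use the F.~Riesz factorization $H^1 = H^2 \cdot H^2$ with $\|f\|_1 = \|g\|_2\|h\|_2$ and the Cauchy product formula, deriving $A \le B$ and $B \le A$ exactly as the paper does. Your handling of absolute values (putting $|a_n|, |b_n|$ into the coefficients of $g,h$) matches the paper's reduction to nonnegative sequences, so there is nothing to add.
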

\begin{cor} The Hardy inequality and Hilbert inequality are
  equivalent.
\end{cor}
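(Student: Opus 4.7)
My plan is to deduce both implications from the classical factorization $H^1 = H^2 \cdot H^2$ with matching norms: every $f \in H^1$ can be written as $f = gh$ with $g, h \in H^2$ and $\|g\|_2 \|h\|_2 = \|f\|_1$, and conversely any product of two $H^2$ functions lies in $H^1$ with norm bounded by the product of the $H^2$ norms. Once this tool is in hand, each direction of the equivalence is essentially a single computation, and inspecting the constants will show that the optimal $A$ and $B$ coincide.

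For the direction that the generalized Hilbert inequality implies the generalized Hardy inequality (which is the classical direction), I would start with $f(z) = \sum_{n \ge 0} a_n z^n \in H^1$ and factor $f = gh$ as above, writing $g(z) = \sum \alpha_k z^k$ and $h(z) = \sum \beta_j z^j$ in $H^2$. Then $a_n = \sum_{k+j=n} \alpha_k \beta_j$, and the triangle inequality gives
\[ \sum_{n=0}^{\infty} |a_n c_n| \le \sum_{n=0}^{\infty} \sum_{k+j=n} |\alpha_k| |\beta_j| |c_{k+j}| = \sum_{k,j \ge 0} |\alpha_k| |\beta_j| |c_{k+j}|. \]
Applying the generalized Hilbert inequality with constant $B$ to the sequences $(|\alpha_k|)$ and $(|\beta_j|)$ bounds this by $B \|(c_n)\| \|g\|_2 \|h\|_2 = B \|(c_n)\| \|f\|_1$, so the least Hardy constant satisfies $A \le B$.

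For the reverse direction, given sequences $(\alpha_n), (\beta_n) \in \ell^2_+$, I would define $g(z) = \sum |\alpha_n| z^n$ and $h(z) = \sum |\beta_n| z^n$, both of which lie in $H^2$ with $\|g\|_2 = \|(\alpha_n)\|_2$ and $\|h\|_2 = \|(\beta_n)\|_2$. Then $f := gh \in H^1$ with $\|f\|_1 \le \|(\alpha_n)\|_2 \|(\beta_n)\|_2$, and the Taylor coefficients of $f$ are the nonnegative numbers $a_n = \sum_{k+j=n} |\alpha_k| |\beta_j|$. Applying the generalized Hardy inequality to $f$ with constant $A$ yields
\[ \sum_{k,j \ge 0} |\alpha_k| |\beta_j| |c_{k+j}| = \sum_{n=0}^{\infty} a_n |c_n| \le A \|(c_n)\| \|f\|_1 \le A \|(c_n)\| \|(\alpha_n)\|_2 \|(\beta_n)\|_2, \]
giving $B \le A$. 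Combining both inequalities yields $A = B$ for the least constants, and Corollary~3.6 follows by specializing to $c_n = 1/(n+1)$ as in Corollaries~3.2 and~3.4.

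I do not anticipate a genuine obstacle here; the only subtle point is invoking the norm-preserving factorization $H^1 = H^2 \cdot H^2$ (via inner--outer factorization, writing the outer part as the square of an outer $H^2$ function), which is the bridge that makes both directions symmetric and forces equality of the constants.
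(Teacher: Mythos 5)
Your proposal is correct and follows essentially the same route as the paper's proof of Theorem~3.5: the generalized Hilbert inequality yields the generalized Hardy inequality via the norm-preserving factorization $f = gh$ with $g,h \in H^2$ and the Cauchy product, while the reverse direction forms $f = gh$ from the two given $\ell^2_+$ sequences and applies the Hardy inequality, giving $A \le B$ and $B \le A$ exactly as in Section~4. Specializing to $c_n = 1/(n+1)$ to obtain the corollary matches the paper as well.
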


\section{Proof of Theorem 3.5}

We present the proof of Theorem 3.5 first because it is the most direct and demonstrates the equivalence of the classic Hardy and Hilbert inequalities.

We first prove that the generalized Hilbert inequality implies the generalized Hardy inequality. Let $(a_n)$ and $(b_n)$ be in $\ell^2_+$ so that by the generalized Hilbert inequality(Theorem 3.3),
\[ \sum_{n,m=0}^{\infty} |a_n| |b_n| | c_{n+m}| \le B \|(c_n)\| \|(a_n)\|_2 \|(b_n)\|_2,\]
where $\|(c_n)\|$ is defined by (3.1).

Let $f(z) = \sum_{n=0}^{\infty} \alpha_n z^n$ be in $H^1.$ It is well known that $f$ can be factorized as $f(z) = g(z)h(z)$ where $g(z) = \sum_{n=0}^{\infty} \beta_n z^n$ and $h(z) = \sum_{n=0}^{\infty} \gamma_n z^n$ are both in $H^2$ and $\|f\|_1= \|g\|_2 \|h\|_2.$  Applying the generlized Hilbert inequality to the sequences $(\beta_n)$ and $(\gamma_n)$ we have that
\[ \sum_{n,m=0}^{\infty} |\beta_n| |\gamma_n| |c_{n+m}| \le B \|(c_n)\| \|(\beta_n)\|_2 \|(\gamma_n)\|_2.\]
This inequality may be rewritten as
\[ \sum_{m=0}^{\infty} \big( \sum_{k=0}^m |c_m| |\beta_k| | \gamma_{m-k}| \big) \le B \|(c_n)\| \|g\|_2\|h\|_2 = B\|(c_n)\| \|f_1\|.\]
By the Cauchy product formula,
\[ |\alpha_m| = | \sum_{k=0}^m \beta_k \gamma_{m-k}| \le \sum_{k=0}^m |\beta_k||\gamma_{m-k}|,\]
and hence,
\[ \sum_{m=0}^{\infty} |\alpha_m| |c_m| \le B \|(c_m)\| \|f\|_1,\]
so that the generalized Hardy inequality follows. This also shows that
$A \le B.$

We now prove that the generalized Hardy inequality implies the
generalized Hilbert inequality.  Note that to prove the generalized
Hilbert inequality it is sufficient to consider sequences in $\ell^2_+$
of non-negative terms. Choosing any two sequences, $(a_n), (b_n)$ in $\ell^2_+$ with $a_n \ge 0$ and $b_n \ge 0.$  Setting $g(z) = \sum_{n=0}^{\infty} a_n z^n$ and $h(z) = \sum_{n=0}^{\infty} b_n z^n$ we obtain two functions in $H^2.$ Hence the function $f(z) = g(z) h(z)= \sum_{n=0}^{\infty} d_n z^n$ is in $H^1,$ where $d_n = \sum_{k=0}^n a_k b_{n-k}.$

Thus, by the generalized Hardy inequality(Theorem 3.1),
\[ \sum_{n=0}^{\infty} |d_n| |c_n| \le A \|(c_n)\| \|f\|_1 \le A
\|(c_n)\| \|g\|_2\|h\|_2= A \|(c_n)\| \|(a_n)\|_2 \|(b_n)\|_2.\]
Substituting for $d_n$ we have
\[\sum_{n,m=0}^{\infty} |c_{n+m}| a_n b_m =
\sum_{n=0}^{\infty}\big(\sum_{k=0}^n |c_n| a_{k} b_{n-k} \big) \le A
\|(c_n)\| \|(a_n)\|_2 \|(b_n)\|_2,\]
and so the generalized Hilbert inequality follows. Moreover, we see
that $B \le A$ and so the best constants must be equal.

\section{Proofs of Theorems 3.1 and 3.3}

We begin with the proof of the generalized Hardy inequality. 
Note that
$$ K = \sup_{0\le r <1} \left(\frac{r}{1-r^{2[\frac{1}{1-r}]}}\right)^2 $$
is finite,
where $[t]$ denotes the greatest integer less than or equal to $t.$
 
Given $I$ an arc on the unit circle, we let
\[ R(I) = \{ z \in \bb D:  \frac{z}{|z|} \in I, 1 - |z| \le |I| \},\]
where $|I|= \frac{length(I)}{2 \pi}$ denotes the normalized arc length.
Recall that a positive Borel measure $\mu$ on $\bb D$ is called a {\bf Carleson measure} if there is a constant $k$ so that $\mu(R(I)) \le k |I|$ for all subarcs.

The theorems of
Fefferman and Garsia say that a function $f$ in $H^1$ belongs to $BMOA$ if and only if
\[ (1-r^2) |f^{\prime}(r e^{i \theta})|^2 r dr d \theta \]
is a Carleson measure. Moreover, in this case if we let $\eta(f)^2$ be
the least constant $k$ in the Carleson condition, then $\eta(f)$ is
essentially, the Garsia norm and hence is a
norm on $BMOA$ that is equivalent to $\|f\|_*.$
 
\begin{prop} Let $(c_n) \in X,$ and set $g(z) = \sum_{n=0}^{\infty} c_n z^n,$ then $g \in BMOA$ and
\[\int \int_{R(I)} (1-r^2) |g^{\prime}(r e^{i \theta})|^2 r dr d
\theta \le 2K \|(c_n)\|^2 |I|.\]
Consequently, the linear map $L:X \to BMOA$ defined by $L((c_n))= g(z)$ is bounded.
\end{prop}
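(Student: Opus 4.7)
The plan is to verify Fefferman's Carleson-measure characterization of $BMOA$ for the function $g(z)=\sum c_nz^n$ by a direct Fourier-series computation, after which the inclusion $g\in BMOA$ and the boundedness of the linear map $L$ both follow at once. The backbone of the estimate is the partial-sum bound
\[
T_n := \sum_{k=1}^n k^2|c_k|^2 \le \|(c_n)\|^2(n+1),
\]
which is immediate from the definition of the $X$-norm together with $k^2\le(k+1)^2$.

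First I compute the $\theta$-average. Since $g'(z)=\sum_{k=1}^\infty kc_kz^{k-1}$, Parseval on the circle of radius $r$ yields
\[
\int_0^{2\pi}|g'(re^{i\theta})|^2\,d\theta = 2\pi\sum_{k=1}^\infty k^2|c_k|^2 r^{2(k-1)}.
\]
Because the integrand is non-negative, restricting $\theta$ to the arc underlying $R(I)$ only decreases this; the missing factor of $|I|$ will be recovered from integrating $r$ over the thin shell $[1-|I|,1]$ dictated by the definition of $R(I)$.

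The central step is a uniform-in-$r$ estimate of the form $(1-r^2)\sum k^2|c_k|^2 r^{2(k-1)} \le 2K\|(c_n)\|^2$. The natural approach is to split the sum at $N=[1/(1-r)]$: for $k\le N$ the factor $r^{2(k-1)}\le 1$ is harmless and $T_N\le\|(c_n)\|^2(N+1)$ settles that piece (using $(N+1)(1-r^2)\le 2$), while for $k>N$ one combines $r^{2(k-1)}\le r^{2N}$ with a second summation by parts applied to the tail of $T_n$. The algebraic identity $1-r^{2N}=(1-r^2)\sum_{j=0}^{N-1}r^{2j}$ together with the constant $K$ introduced at the start of the section — which by construction bounds $r^2/(1-r^{2N})^2$ — is exactly what converts the surviving $r^{2N}$ and $(N+1)(1-r^2)$ factors into the single coefficient $2K$ in front of $\|(c_n)\|^2$.

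Inserting this bound into the double integral and integrating $r$ over $[1-|I|,1]$ yields
\[
\iint_{R(I)}(1-r^2)|g'(re^{i\theta})|^2 r\,dr\,d\theta \le 2K\|(c_n)\|^2\int_{1-|I|}^1 r\,dr \le 2K\|(c_n)\|^2|I|,
\]
which is the displayed estimate. Fefferman's theorem then delivers $g\in BMOA$ with Garsia norm controlled by a constant times $\sqrt{K}\,\|(c_n)\|$, and linearity gives the boundedness of $L:X\to BMOA$. The main obstacle is the central estimate: a single crude summation by parts already yields some absolute constant times $\|(c_n)\|^2$ (enough to conclude $g\in BMOA$), but pinning down precisely the constant $2K$ requires the split-at-$N$ decomposition, for which the definition of $K$ is manifestly tailored.
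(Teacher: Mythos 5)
Your proposal is correct in substance but its central estimate runs along a genuinely different line from the paper's. The paper uses no summation by parts: it partitions the series $\sum_k k^2|c_k|^2 r^{2k}$ into consecutive blocks of length $N=[\frac{1}{1-r}]$, bounds the $p$-th block by $\|(c_n)\|^2(p+1)N r^{2pN+2}$ using the $X$-norm, and sums the series $\sum_{p\ge 0}(p+1)x^p=(1-x)^{-2}$ with $x=r^{2N}$; the constant $K=\sup_r r^2(1-r^{2N})^{-2}$ is tailor-made to absorb the resulting factor, and the radial integration of $(1-r^2)/(1-r)\le 1+r$ produces the $2$. Your route --- Abel summation against the partial sums $T_n\le\|(c_n)\|^2(n+1)$ --- is at least as good and arguably cleaner: writing $k^2|c_k|^2=T_k-T_{k-1}$ gives, with $x=r^2$ and $T_0=0$,
\[
\sum_{k\ge 1}k^2|c_k|^2x^{k-1}=(1-x)\sum_{k\ge 1}T_kx^{k-1}\le \|(c_n)\|^2(1-x)\Bigl(\frac{1}{(1-x)^2}+\frac{1}{1-x}\Bigr)=\|(c_n)\|^2\Bigl(\frac{1}{1-x}+1\Bigr),
\]
so that $(1-r^2)\sum_{k}k^2|c_k|^2r^{2(k-1)}\le 2\|(c_n)\|^2$ uniformly in $r$ --- an absolute constant, with no blocks and no $K$ anywhere. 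Integrating $r$ over $[1-|I|,1]$ and enlarging the $\theta$-range to the full circle (as you and the paper both do) then gives the Carleson bound.

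Two caveats. First, your head/tail split at $N$ is both unnecessary and slightly off as stated: $(N+1)(1-r^2)\le 2$ fails for mid-range $r$ (at $r=1/2$ one has $N=2$ and $(N+1)(1-r^2)=9/4$), and the bookkeeping that is supposed to extract precisely the coefficient $2K$ from the split is asserted rather than carried out, so as written the stated constant is not actually proved. Second, this is easily repaired without the split: since $r^{2[1/(1-r)]}\to e^{-2}$ as $r\to 1^-$, one has $K\ge(1-e^{-2})^{-2}>1$, so your own ``crude'' single summation by parts, which you correctly note gives an absolute constant, already yields $2\|(c_n)\|^2\le 2K\|(c_n)\|^2$ and hence the proposition's displayed inequality verbatim (indeed with a slightly sharper constant than the paper's). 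With that one observation your argument is complete; the remaining steps --- Parseval in $\theta$, the definition of $R(I)$, and Fefferman's Carleson-measure characterization to conclude $g\in BMOA$ and the boundedness of $L$ --- coincide with the paper's.
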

\begin{proof}Set $A= \|(c_n)\|^2$ so that $\sum_{k=m}^{m+n} k^2 |c_k|^2 \le An.$
Note that for $0 < r <1,$
\begin{multline*} \int_0^{2\pi} |g^{\prime}(re^{i \theta})|^2 d \theta = \sum_{k=1}^{\infty} k^2|c_k|^2r^{2k} = \sum_{p=0}^{\infty}\sum_{m= p[\frac{1}{1-r}] +1}^{(p+1)[\frac{1}{1-r}]} m^2|c_m|^2 r^{2m} \le \\ \sum_{p=0}^{\infty}r^{2p[\frac{1}{1-r}]+2}A(p+1)[\frac{1}{1-r}] \le A[\frac{1}{1-r}]r^2(\frac{1}{1-r^{2[\frac{1}{1-r}]}})^2 \le AK \frac{1}{1-r}. \end{multline*}

Hence, 
\[\int\int_{R(I)} (1-r^2) |g^{\prime}(re^{i \theta})|^2 r dr d \theta \le AK \int_{1-|I|}^1r+r^2 dr \le 2AK|I|\]
and we have shown the Carleson estimate. Hence $g \in BMOA$ with
$\eta(g) \le \sqrt{2K} \|(c_n)\|.$
\end{proof}

{\em Proof of the generalized Hardy inequality.} Let $(c_n) \in X,$ and let $f(z) = \sum_{n=0}^{\infty} a_n z^n \in H^1.$ Write $a_n = \alpha_n|a_n|$ where $|\alpha_n| =1.$  Set $g(z) = \sum_{n=0}^{\infty} \alpha_n|c_n|z^n$ so that by Proposition~4.1, $g$ is in $BMOA$ with norm independent of the particular choice of $\alpha$'s.

We have that
\[ \langle f,g \rangle = \lim_{r \to 1^-} \int_0^{2\pi} f(re^{i
  \theta}) \overline{g(re^{i \theta})} d \theta =
\sum_{n=0}^{\infty} |a_n||c_n|. \]

Hence, 
\[\sum_{n=0}^{\infty} |a_n| |c_n| \le \|f\|_1\|g\|_* \le \|L\|
\|f\|_1 \|(c_n)\|\]
and the proof is complete.

{\em Proof of the generalized Hilbert Inequality.} Given $(a_n), (b_n)
\in \ell^2_+$ and $(c_n) \in X,$ set $f(z) = \sum_{n=0}^{\infty}
|a_n|z^n$ and $h(z) = \sum_{n=0}^{\infty} |b_n| z^n.$  Then $f,h$ are in
$H^2$ and hence, $f(z)h(z) = \sum_{n,m=0}^{\infty} |a_nb_m| z^{n+m}$ is
in $H^1.$ 

Defining $g(z) = \sum_{n=0}^{\infty} |c_n| z^n$ and arguing as above,
we have that
\begin{multline*} \sum_{n,m=0}^{\infty} |a_n||b_m||c_{n+m}| = \langle
  fh,g\rangle \le\\ \|L\|\|(c_n)\|\|fh\|_1 \le \|L\|
  \|(c_n)\|\|(a_n)\|_2\|(b_n)\|_2, \end{multline*}
and the result follows.

\section{Slow Decay in X}

The importance of the classic Hardy inequality, which corresponds to
the sequence $c_n = \frac{1}{n+1},$ is that it gives us
information about the rate of decay of coefficients of $H^1$
functions. For this reason it is interesting to seek elements $(c_n)
\in X$
which tend to 0 at a slower rate. In particular, we shall show that
there exist sequences in $X$ so that $\sup \{ n^s c_n: n \in \bb N \}=
 \infty$ for any $s>1/2.$

To this end fix an $r, 1/2 \le r \le 1$ and fix $\beta > 1.$ We
define $c_n$ inductively as follows:
Set $c_1 =1$ so that $1^2 |c_1|^2 \le 1 \beta$ and $1^2 |c_1|^2 +
\frac{(1+1)^2}{(1+1)^{2r}}\le (1+1) \beta$ Assume that $c_1,...,c_n$
have been defined so that 
\begin{equation}
\sum_{k=1}^m k^2 |c_k|^2 \le m \beta \text{ for } m=1,2,...,n.
\end{equation}

If $\sum_{k=1}^n k^2|c_k|^2 + \frac{(n+1)^2}{(n+1)^{2r}} \le \beta(n+1),$ then we set
$c_{n+1} = \frac{1}{(n+1)^r}$ and we have that (6.1) holds for $m=1,..., (n+1).$
If this inequality is false, then we set $c_{n+1} = \frac{1}{n+1}$ and observe that
$\sum_{k=1}^n k^2 |c_k|^2 + (n+1)^2 |c_{n+1}|^2 \le \beta n + 1 \le \beta(n+1),$ so again (6.1) holds for $m= 1,..., (n+1).$

In this way we define a sequence $c_n$ that satisfies (6.1) for all $n$ with $c_n = \frac{1}{n^r}$ or $c_n = \frac{1}{n}$ for each $n.$

Now suppose that $c_k= \frac{1}{k}$ for $n \le k \le n+m.$  Then
\begin{multline*} \sum_{k=1}^{n+m} k^2 |c_k|^2 + \frac{(n+m+1)^2}{(n+m+1)^{2r}}   = \sum_{k=1}^n k^2 |c_k|^2 + (m-1) + (n+m+1)^{2-2r} \\ \le \beta n + (m-1) + (n+m+1)^{2-2r}.\end{multline*}
Since, 
\[ \beta n + (m-1) + (n+m-1)^{2-2r} \le \beta(n+m+1),\]
if and only if $(n+m-1)^{2-2r} \le (\beta -1) m + \beta +1$ and $2-2r < 1$ we see that eventually, there will exist an $m$ for which this inequality holds and then $c_{n+m+1} = \frac{1}{(n+m+1)^r}.$

Thus,  $c_n= \frac{1}{n^r}$ infinitely often.  In particular, $\sup \{ n^s c_n: n \in \bb N \}$ will be infinite for any $s>r\ge 1/2.$

\end{document}